\definecolor{red}{rgb}{1,0,0}
\definecolor{blue}{rgb}{.2,.2,.8}
\newtheorem{theorem}{Theorem}[section]
\newtheorem{corollary}[theorem]{Corollary}
\theoremstyle{definition}
\begin{document}

	\title{Overpartitions and functions from multiplicative number theory}
	\author{Mircea Merca
	\\ 
	\footnotesize Department of Mathematics, University of Craiova, 200585 Craiova, Romania\\
	\footnotesize mircea.merca@profinfo.edu.ro
}
	\date{}
	\maketitle


\begin{abstract}
Let $\alpha$ and $\beta$ be two nonnegative integers such that $\beta < \alpha$.
For an arbitrary sequence $\{a_n\}_{n\geqslant 1}$ of complex numbers, we consider the generalized Lambert series in order to investigate linear combinations of the form
$\sum_{k\geqslant 1} S(\alpha k-\beta,n) a_k$, where $S(k,n)$ is the total number of non-overlined parts equal to $k$ in all the overpartitions of $n$. The general nature of the numbers $a_n$ allows us to provide connections between 
overpartitions and functions from multiplicative number theory. 
\\
\\
{\bf Keywords:} partitions, Lambert series, theta series
\\
\\
{\bf MSC 2010:}  11P81, 05A19 
\end{abstract}

\section{Introduction} 

	A partition of a positive integer $n$ is a sequence of positive integers whose sum is $n$. The order of the summands is unimportant when writing the partitions of $n$, but for consistency, a partition of $n$ will be written with the summands in a nonincreasing order \cite{Andrews98}.
	
An overpartition of a positive integer $n$ is a partition of $n$ in which the first occurrence of a part
of each size may be overlined or not \cite{Corteel}. Let $\overline{p}(n)$ denote the number of overpartitions of an integer $n$. For example, $\overline{p}(3)=8$ because  there are $8$ overpartitions of $3$:
$$(3),\ (\bar{3}),\ (2,1),\ (\bar{2},1),\ (2,\bar{1}),\ (\bar{2},\ \bar{1}),\ (1,1,1),\ (\bar{1},1,1).$$
Since the overlined parts form a partition into distinct parts and the non-overlined parts form an ordinary partition, the  generating function for overpartitions is given by
$$
\sum_{n=0}^\infty \overline p(n) q^n = \frac{(-q;q)_\infty}{(q;q)_\infty}.
$$
Here and throughout this paper, we use the following customary $q$-series notation:
\begin{align*}
& (a;q)_n = \begin{cases}
1, & \text{for $n=0$,}\\
(1-a)(1-aq)\cdots(1-aq^{n-1}), &\text{for $n>0$;}
\end{cases}\\
& (a;q)_\infty = \lim_{n\to\infty} (a;q)_n.
\end{align*}
Because the infinite product $(a;q)_{\infty}$ diverges when $a\neq 0$ and $|q| \geqslant 1$, whenever
$(a;q)_{\infty}$ appears in a formula, we shall assume $|q| < 1$.

The following theta identity is often attributed to Gauss \cite[p. 23, eqs. (2.2.12)]{Andrews98}:
\begin{equation}\label{eq:Gauss}
1+2\sum_{n=1}^{\infty} (-1)^n q^{n^2} = \frac {(q;q)_\infty} {(-q;q)_\infty}.
\end{equation}
Rewriting this expression as 
\begin{equation*}
\frac {(-q;q)_\infty} {(q;q)_\infty} \left( 1+2\sum_{n=1}^{\infty} (-1)^n q^{n^2} \right)=1 ,
\end{equation*}
we deduce the following linear recurrence relation for the overpartition function $\overline{p}(n)$:
\begin{equation}\label{Rec}
\overline{p}(n) + 2 \sum_{j=1}^{\infty} (-1)^j \overline{p}(n-j^2)=\delta_{0,n},
\end{equation}
where $\delta_{i,j}$ is the Kronecker delta function and $\overline{p}(n)=0$ if $n$ is a negative integer. 

We denote by $S(k,n)$ the number of  non-overlined parts equal to $k$ in all the overpartitions of $n$. For example,
the overpartitions of $4$ are:
$(4)$, $(\overline{4})$, 
$(3,1)$, $(\overline{3},1)$, $(3,\overline{1})$, $(\overline{3},\overline{1})$,
$(2,2)$, $(\overline{2},2)$, 
$(2,1,1)$, $(\overline{2},1,1)$, $(2,\overline{1},1)$, $(\overline{2},\overline{1},1)$, 
$(1,1,1,1)$ and $(\overline{1},1,1,1)$.
Then we have
$S(1,4) = 15$, $S(2,4) = 5$, $S(3,4) = 2$, and $S(4,4) = 1$.

Let $\alpha$ and $\beta$ be two integer such that $0\leqslant \beta < \alpha$.
For an arbitrary sequence $\{a_n\}_{n\geqslant 1}$ of complex numbers we consider
$$A(a,\alpha,\beta;n) = \sum_{k=1}^\infty S(\alpha k-\beta,n) a_k.$$
For example, if $a_n=1$ for any positive integer $n$, then  $A(1,\alpha,\beta;n)$ counts the non-overlined parts congruent to $-\beta$ modulo $\alpha$ in all the overpartitions of $n$. On the other hand, we have:
\begin{align*}
& A(a,1,0;4) = 15a_1+5a_2+2a_3+a_4,\\
& A(a,2,0;4) = 5a_1+a_2,\\
& A(a,2,1;4) = 15a_1+2a_2.
\end{align*}

Also for $\alpha, \beta \in \mathbb{Z}$ such that $0\leqslant \beta < \alpha$, and  an arbitrary sequence $\{a_n\}_{n\geqslant 1}$ of complex numbers, we consider generalized Lambert series expansions of the form \cite{MrkS}
\begin{equation}\label{LS}
\sum_{n=1}^\infty \frac{a_n q^{\alpha n - \beta}}{1-q^{\alpha n - \beta}} = \sum_{n=1}^\infty B(a,\alpha,\beta;n) q^n, \qquad |q|<1.
\end{equation}
The coefficients of the generalized Lambert series expansion on the left-hand side of
the previous equation are given on the right by
$$B(a,\alpha,\beta;n) = \sum_{\alpha d-\beta|n} a_d.$$
Clearly this sum runs over all positive divisors of $n$ which are congruent to $-\beta$ modulo $\alpha$.
If $a_n=1$ for any positive integer $n$, then  $B(1,\alpha,\beta;n)$ counts the positive divisors of $n$ which are congruent to $-\beta$ modulo $\alpha$.

In \cite{Andrews18}, Andrews and Merca 
defined $\overline{M}_k(n)$ to be the number of overpartitions of $n$ in which the first part larger than $k$ appears at least $k+1$ times. 
If $n=12$ and $k=2$ then we have $M_2(12)=16$ because the overpartitions in question are 
$(4,4,4)$,
$(\overline{4},4,4)$,
$(3,3,3,3)$,
$(\overline{3},3,3,3)$,
$(3,3,3,2,1)$,
$(3,3,3,\overline{2},1)$,
$(3,3,3,2,\overline{1})$,
$(3,3,3,\overline{2},\overline{1})$,
$(\overline{3},3,3,2,1)$,
$(\overline{3},3,3,\overline{2},1)$,
$(\overline{3},3,3,2,\overline{1})$,
$(\overline{3},3,3,\overline{2},\overline{1})$,
$(3,3,3,1,1,1)$,
$(3,3,3,\overline{1},1,1)$,
$(\overline{3},3,3,1,1,1)$,
$(\overline{3},3,3,\overline{1},1,1)$.

In this article, we provide the following identity.

\begin{theorem}\label{TH2}
	Let $\alpha, k, n$ be positive integers and let $\beta$ be a nonnegative integer such that $\beta < \alpha$.
    For an arbitrary sequence $\{a_m\}_{m\geqslant 1}$ of complex numbers, we have
	\begin{align*}
	& (-1)^{k} \left(A(a,\alpha,\beta;n)+ 2\sum_{j=1}^k (-1)^j A(a,\alpha,\beta;n-j^2) - B(a,\alpha,\beta;n) \right) \\
	& = \sum_{j=1}^n B(a,\alpha,\beta;j)\overline{M}_k(n-j).
	\end{align*}
\end{theorem}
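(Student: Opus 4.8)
\medskip
\noindent\emph{Proof strategy.} The plan is to translate the statement into a generating‑function identity and then reduce it to Gauss's identity \eqref{eq:Gauss} together with its truncation due to Andrews and Merca. All manipulations are with formal power series in $q$ (equivalently, $|q|<1$). The first ingredient is the generating function for $S(k,n)$. Fix $k\geqslant 1$ and write $S(k,n)=\sum_{j\geqslant 1}s_j(k,n)$, where $s_j(k,n)$ counts the overpartitions of $n$ with at least $j$ non-overlined parts equal to $k$ (an overpartition with exactly $t$ such parts being counted once for each $j=1,\dots,t$, so the sum over $j$ recovers $S(k,n)$). Deleting $j$ non-overlined copies of $k$ is a bijection from those overpartitions onto all overpartitions of $n-jk$, so $\sum_{n}s_j(k,n)q^n=q^{jk}(-q;q)_\infty/(q;q)_\infty$, and summing over $j\geqslant 1$,
\begin{equation*}
\sum_{n=0}^{\infty}S(k,n)\,q^n=\frac{q^k}{1-q^k}\cdot\frac{(-q;q)_\infty}{(q;q)_\infty}.
\end{equation*}
(One may instead mark the non-overlined $k$'s by a variable in $\prod_i(1+q^i)/(1-q^i)$ and differentiate.)

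Since $\alpha r-\beta\geqslant 1$ for every $r\geqslant 1$, I would next apply this with $k$ replaced by $\alpha r-\beta$, multiply by $a_r$, sum over $r\geqslant 1$, and invoke the definition \eqref{LS} of the generalized Lambert series to obtain
\begin{equation*}
\sum_{n=0}^{\infty}A(a,\alpha,\beta;n)\,q^n=\frac{(-q;q)_\infty}{(q;q)_\infty}\sum_{r=1}^{\infty}\frac{a_r\,q^{\alpha r-\beta}}{1-q^{\alpha r-\beta}}=\frac{(-q;q)_\infty}{(q;q)_\infty}\,G(q),
\end{equation*}
where $G(q):=\sum_{n\geqslant 1}B(a,\alpha,\beta;n)q^n$; in particular $A(a,\alpha,\beta;0)=0$.

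Multiplying this identity by $1+2\sum_{j=1}^{k}(-1)^jq^{j^2}$ and subtracting $G(q)$ gives
\begin{equation*}
\Bigl(\sum_{n\geqslant 0}A(a,\alpha,\beta;n)q^n\Bigr)\Bigl(1+2\sum_{j=1}^{k}(-1)^jq^{j^2}\Bigr)-G(q)=G(q)\left(\frac{(-q;q)_\infty}{(q;q)_\infty}\Bigl(1+2\sum_{j=1}^{k}(-1)^jq^{j^2}\Bigr)-1\right).
\end{equation*}
The truncated form of \eqref{eq:Gauss} established in \cite{Andrews18} reads
\begin{equation*}
\frac{(-q;q)_\infty}{(q;q)_\infty}\left(1+2\sum_{j=1}^{k}(-1)^jq^{j^2}\right)=1+(-1)^k\sum_{m=1}^{\infty}\overline{M}_k(m)\,q^m,
\end{equation*}
so the right-hand side above equals $(-1)^kG(q)\sum_{m\geqslant 1}\overline{M}_k(m)q^m$. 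Reading off the coefficient of $q^n$ on both sides — using $A(a,\alpha,\beta;m)=0$ for $m<0$ on the left, and $\overline{M}_k(0)=0$ on the right so the divisor sum may run from $1$ to $n$ — produces
\begin{equation*}
A(a,\alpha,\beta;n)+2\sum_{j=1}^{k}(-1)^jA(a,\alpha,\beta;n-j^2)-B(a,\alpha,\beta;n)=(-1)^k\sum_{j=1}^{n}B(a,\alpha,\beta;j)\,\overline{M}_k(n-j),
\end{equation*}
and multiplying by $(-1)^k$ is exactly Theorem~\ref{TH2}.

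The two nontrivial ingredients are the $S$-generating function of the first paragraph and the truncated Gauss identity with coefficients $\overline{M}_k$; once both are available, the remainder is routine power-series bookkeeping. I expect the genuine obstacle to be the truncated identity itself: it is the main result of \cite{Andrews18}, and a self-contained treatment would have to re-derive it (by expanding $1/(q;q)_\infty$ via Gaussian binomials and pairing it against the truncated theta series), which is independent of the Lambert-series apparatus of the present paper.
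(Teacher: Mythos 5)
Your proposal is correct and follows essentially the same route as the paper: derive the generating function $\sum_n S(k,n)q^n = \frac{q^k}{1-q^k}\cdot\frac{(-q;q)_\infty}{(q;q)_\infty}$, assemble the generating function \eqref{GFA} for $A(a,\alpha,\beta;n)$, multiply by the truncated Gauss identity \eqref{eq4} of Andrews--Merca (whose error term is $(-1)^k\sum_m \overline{M}_k(m)q^m$), and compare coefficients. The only cosmetic difference is that you justify the $S(k,n)$ generating function by a delete-$j$-copies bijection rather than by differentiating the bivariate product, which is a perfectly valid substitute.
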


An immediate consequence of Theorem \ref{TH2} is given by the following infinite
family of linear inequalities.

\begin{corollary}\label{C3}
	Let $\alpha, k, n$ be positive integers and let $\beta$ be a nonnegative integer such that $\beta < \alpha$.
	For an arbitrary sequence $\{a_m\}_{m\geqslant 1}$ of nonnegative real numbers, we have
	\begin{equation*}
	(-1)^{k} \left(A(a,\alpha,\beta;n)+ 2\sum_{j=1}^k (-1)^j A(a,\alpha,\beta;n-j^2) - B(a,\alpha,\beta;n) \right) \geqslant 0, 
	\end{equation*}
	with strict inequality if $n\geqslant (k+1)^2$. For example:
	\begin{align*}
	& A(a,\alpha,\beta;n)-2A(a,\alpha,\beta;n-1) \leqslant B(a,\alpha,\beta;n),\\
	& A(a,\alpha,\beta;n)-2A(a,\alpha,\beta;n-1)+2A(a,\alpha,\beta;n-4) \geqslant B(a,\alpha,\beta;n).
	\end{align*}
\end{corollary}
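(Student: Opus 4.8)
The plan is to obtain Corollary \ref{C3} directly from Theorem \ref{TH2}; once that identity is in hand, all that remains is a sign check together with an elementary positivity observation, so the corollary is essentially immediate. First I would remark that when every $a_m$ is a nonnegative real number, the right-hand side of Theorem \ref{TH2} is manifestly nonnegative: the coefficient $B(a,\alpha,\beta;j)=\sum_{\alpha d-\beta\mid j}a_d$ is a finite sum of nonnegative reals, so $B(a,\alpha,\beta;j)\geqslant 0$ for every $j$, while $\overline{M}_k(m)\geqslant 0$ since it counts a set of overpartitions. Hence $\sum_{j=1}^{n}B(a,\alpha,\beta;j)\overline{M}_k(n-j)$ is a sum of products of nonnegative numbers and is therefore $\geqslant 0$; by Theorem \ref{TH2} this quantity equals $(-1)^{k}\bigl(A(a,\alpha,\beta;n)+2\sum_{j=1}^{k}(-1)^{j}A(a,\alpha,\beta;n-j^{2})-B(a,\alpha,\beta;n)\bigr)$, which is precisely the asserted inequality.

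For the strict inequality I would first locate the support of $\overline{M}_k$. A part larger than $k$ that occurs at least $k+1$ times already contributes at least $(k+1)(k+1)=(k+1)^{2}$ to the integer being partitioned, so $\overline{M}_k(m)=0$ for $m<(k+1)^{2}$; conversely, for $m\geqslant (k+1)^{2}$ the overpartition consisting of $k+1$ copies of $k+1$ together with $m-(k+1)^{2}$ copies of the part $1$ is one of those counted, whence $\overline{M}_k(m)\geqslant 1$ there. Consequently the sum on the right of Theorem \ref{TH2} acquires a strictly positive summand as soon as $n-j\geqslant (k+1)^{2}$ for some index $j$ with $B(a,\alpha,\beta;j)>0$, which yields the strict inequality asserted for $n\geqslant (k+1)^{2}$. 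Finally, the two displayed special cases are just $k=1$ and $k=2$, rewritten by carrying the factor $(-1)^{k}$ --- namely $-1$ and $+1$ --- to the other side.

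The only point demanding any attention is tracking the sign $(-1)^{k}$ so that the inequality points in the right direction and, for the strict case, exhibiting one positive summand of the convolution; both are routine given Theorem \ref{TH2} together with the fact that $\overline{M}_k$ vanishes below $(k+1)^{2}$ and is positive from $(k+1)^{2}$ on.
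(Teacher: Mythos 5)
Your derivation of the weak inequality is exactly the intended one: the paper states Corollary \ref{C3} as an immediate consequence of Theorem \ref{TH2} and supplies no separate argument, and your observation that $\sum_{j=1}^{n}B(a,\alpha,\beta;j)\overline{M}_k(n-j)$ is a sum of products of nonnegative quantities is all that is needed for that part.

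For the strict inequality, however, your own (correct) analysis does not deliver the conclusion you assert. You establish that $\overline{M}_k(m)=0$ for $m<(k+1)^2$ and $\overline{M}_k(m)\geqslant 1$ for $m\geqslant(k+1)^2$, so a strictly positive summand requires an index $j$ with $1\leqslant j\leqslant n-(k+1)^2$ and $B(a,\alpha,\beta;j)>0$. This forces $n\geqslant(k+1)^2+1$, not $n\geqslant(k+1)^2$: when $n=(k+1)^2$ every term of the convolution has $n-j<(k+1)^2$, so equality holds. (Concretely, for $a\equiv 1$, $\alpha=1$, $\beta=0$, $k=1$, $n=4$ one finds $A(1,1,0;4)=23$, $A(1,1,0;3)=10$, $B(1,1,0;4)=3$, and $23-2\cdot 10=3$.) One also needs some $j$ in that range with $B(a,\alpha,\beta;j)>0$, which fails, for instance, if all $a_m=0$. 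These are really defects of the corollary as stated rather than of your method, but your final sentence simply restates the claim instead of deriving it; the statement your argument actually proves is that the inequality is strict precisely when there exists $j$ with $1\leqslant j\leqslant n-(k+1)^2$ and $B(a,\alpha,\beta;j)>0$.
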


	On the other hand, the limiting case $k\to\infty$ of Theorem \ref{TH2} reads as follows.
	
	\begin{corollary}\label{TH1}
		Let $\alpha, \beta, n$ be nonnegative integers such that $\beta < \alpha$.
		For an arbitrary sequence $\{a_m\}_{m\geqslant 1}$ of complex numbers, we have
		$$
		A(a,\alpha,\beta;n)+2\sum_{j=1}^\infty (-1)^j A\big(a,\alpha,\beta;n-j^2\big) = B(a,\alpha,\beta;n).
		$$
	\end{corollary}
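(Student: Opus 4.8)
The plan is to prove the theorem by working at the level of generating functions, treating $n$ as a formal variable and the statement as an identity of power series in $q$. First I would record the key generating-function ingredients: from the definition of $S(k,n)$, the standard bookkeeping for overpartitions gives
\[
\sum_{n\geqslant 1} S(k,n) q^n = \frac{(-q;q)_\infty}{(q;q)_\infty}\cdot\frac{q^{k}}{1-q^{k}},
\]
since distinguishing one fixed non-overlined part of size $k$ amounts to multiplying the overpartition generating function by $q^k/(1-q^k)$. Summing this against $a_k$ over $k = \alpha m - \beta$ yields
\[
\sum_{n\geqslant 1} A(a,\alpha,\beta;n) q^n = \frac{(-q;q)_\infty}{(q;q)_\infty}\sum_{m\geqslant 1}\frac{a_m q^{\alpha m-\beta}}{1-q^{\alpha m-\beta}} = \frac{(-q;q)_\infty}{(q;q)_\infty}\sum_{j\geqslant 1} B(a,\alpha,\beta;j)q^{j},
\]
where the last equality is exactly the Lambert-series expansion \eqref{LS}. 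So the whole identity is governed by the single series $L(q):=\sum_{j\geqslant 1}B(a,\alpha,\beta;j)q^j$, multiplied by $(-q;q)_\infty/(q;q)_\infty$.

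Next I would bring in the Andrews–Merca function $\overline{M}_k(n)$. The engine here is the finite Rogers–Fine / truncated-theta type identity from \cite{Andrews18} for overpartitions, which in generating-function form should read
\[
\frac{(-q;q)_\infty}{(q;q)_\infty}\left(1 + 2\sum_{j=1}^{k}(-1)^j q^{j^2}\right) = 1 + (-1)^k \sum_{n\geqslant 1}\overline{M}_k(n) q^{n}
\]
(the $k\to\infty$ case degenerates to Gauss's identity \eqref{eq:Gauss}, consistent with $\overline{M}_\infty \equiv 0$). I would either cite this directly or re-derive it from the known truncated form; this is the one external input the argument really leans on.

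The main step is then pure multiplication. Multiply the displayed $\overline{M}_k$-identity by $L(q)$ and use the first computation to replace $\frac{(-q;q)_\infty}{(q;q)_\infty}L(q)$ by $\sum A(a,\alpha,\beta;n)q^n$:
\[
\left(\sum_{n}A(a,\alpha,\beta;n)q^n\right)\!\left(1+2\sum_{j=1}^k(-1)^jq^{j^2}\right) = L(q) + (-1)^k L(q)\sum_{n\geqslant 1}\overline{M}_k(n)q^n.
\]
Extracting the coefficient of $q^n$ on both sides gives, on the left, $A(a,\alpha,\beta;n) + 2\sum_{j=1}^k(-1)^j A(a,\alpha,\beta;n-j^2)$ (the sum over $j$ is finite and the terms with $j^2>n$ vanish since $A(a,\alpha,\beta;m)=0$ for $m\leqslant 0$, matching the truncation), and on the right $B(a,\alpha,\beta;n) + (-1)^k\sum_{j=1}^{n}B(a,\alpha,\beta;j)\overline{M}_k(n-j)$. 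Rearranging and multiplying through by $(-1)^k$ is exactly the claimed identity.

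The main obstacle is locating and correctly stating the $\overline{M}_k$ generating-function identity with the right sign and normalization — in particular pinning down the constant term and the $(-1)^k$ prefactor so that the truncated Gauss sum $1+2\sum_{j=1}^k(-1)^jq^{j^2}$ matches it. Everything else is formal manipulation: the interchange of summations in deriving the $A$-series (legitimate as a formal power series identity, or for $|q|<1$ by absolute convergence when the $a_m$ are, say, polynomially bounded), and coefficient extraction. I would also note that Corollary~\ref{TH1} then follows by letting $k\to\infty$, since $\overline{M}_k(m)\to 0$ for each fixed $m$ (for $m<(k+1)^2$ there are no such overpartitions), killing the right-hand sum; and Corollary~\ref{C3} follows because $B(a,\alpha,\beta;j)\geqslant 0$ and $\overline{M}_k(n-j)\geqslant 0$ when the $a_m$ are nonnegative, with strict positivity once $n\geqslant(k+1)^2$ guarantees a nonzero $\overline{M}_k$ term paired with $B(a,\alpha,\beta;0)$-type contributions — more precisely with $j=n$ and using $\overline{M}_k((k+1)^2)>0$.
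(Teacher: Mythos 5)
Your proposal is correct and follows essentially the same route as the paper: you derive the generating function \eqref{GFA} for $A(a,\alpha,\beta;n)$, multiply the Andrews--Merca truncated Gauss identity \eqref{eq4} (equivalently, your compact form with $1+(-1)^k\sum_n\overline{M}_k(n)q^n$ on the right) by the Lambert series \eqref{LS}, extract coefficients to get Theorem~\ref{TH2}, and then let $k\to\infty$ using $\overline{M}_k(m)=0$ for $m<(k+1)^2$ to obtain Corollary~\ref{TH1}. This is exactly the paper's argument, and your guessed normalization of the $\overline{M}_k$ identity matches \eqref{eq4}.
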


	The classical M\"{o}bius function $\mu(n)$ is an important multiplicative function in number theory and combinatorics. This function is defined for all positive integers $n$ and has its values in $\{-1, 0, 1\}$ depending on the factorization of $n$ into prime factors:
	$$
	\mu(n)=
	\begin{cases}
	0, & \text{if $n$ has a squared prime factor,}\\
	(-1)^k, & \text{if $n$ is a product of $k$ distinct primes.}
	\end{cases}
	$$
	Considering the identity \cite[Theorem 263]{Hardy79}
	$$
	\sum_{n=1}^{\infty} \frac{\mu(n) q^{n}}{1-q^{n}} = q,
	$$
	and the expression \eqref{GFA} for the generating function of $A(a,\alpha,\beta;n)$, we deduce that
	$$\sum_{n=1}^\infty A(\mu,1,0;n) q^n = \frac{(-q;q)_\infty}{(q;q)_\infty}\cdot q = \sum_{n=1}^\infty \overline{p}(n-1) q^n.$$
	For $(a,\alpha,\beta)=(\mu,1,0)$, we see that the statement of Corollary \ref{TH1} reduces to the recurrence relation \eqref{Rec}.
Moreover, we remark that the overpartition function $\overline{p}(n)$ can be expressed in terms of the classical 
M\"{o}bius function $\mu(n)$ as follows:
$$ \overline{p}(n) = \sum_{k=1}^{n+1} S(k,n+1) \mu(k).$$
For example, the case $n=3$ of this relation provides the following decomposition of $\overline{p}(3)$:
$$S(1,4)-S(2,4)-S(3,4)=15-5-2=8.$$

The general nature of the sequence $\{a_n\}_{n\geqslant 1}$ allows for applications of Theorems \ref{TH2} 
to many important functions from multiplicative number theory.
The ordinary Lambert series expansions studied in 
\cite{MERCA-LSFACTTHM,MERCA-SCHMIDT1,
	SCHMIDT-LSFACTTHM} 
can be converted into a series of the form in \eqref{LS} as follows:
$$
\sum_{n=1}^{\infty} \frac{f(n) q^{\alpha n - \beta}}{1-q^{\alpha n - \beta}} = 
\sum_{n=1}^{\infty} \left( \sum_{\alpha d-\beta|n} f(d)\right) q^n,
$$
where $f$ is any of the following functions:
the \textit{M\"{o}bius function} $\mu(n)$, 
\textit{Euler's phi function} $\phi(n)$, 
the generalized \emph{sum of divisors function} $\sigma_{\alpha}(n)$,
\textit{Liouville's function} $\lambda(n)$,
\textit{von Mangoldt's function}  $\Lambda(n)$,
and \textit{Jordan's totient function} $J_t(n)$.
In this way, many identities of the following form
\begin{equation}\label{F}
\sum_{k=1}^{\lfloor n/\alpha \rfloor} S(\alpha k-\beta,n) f(k)
= \sum_{k=1}^n B(f,\alpha,\beta;k) \overline{p}(n-k)
\end{equation}
can be easily derived.
These relations are important since there are
rarely such simple and universal identities expressing formulas for an entire
class of special arithmetic functions considered in the context of so many
applications in number theory and combinatorics.

The results proved in this article continue the spirit of \cite{Merca15,Merca16,Merca17, MrkMJOM,MrkMJOM1,Merca18a,Merca18b,MrkS,SCHMIDT-LSFACTTHM} by connecting the seemingly disparate branches of additive and multiplicative number theory. For further reading at the intersection of the additive and multiplicative branches of number theory, we recommend \cite{Alladi}, \cite[Ch. VII]{Cai}, \cite{Jameson,Schneider,Wakhare}.

The paper is organized as follows. We will first prove Theorem \ref{TH2} in Section \ref{S2}. 
In Section \ref{S3}, we consider the case $(a,\alpha,\beta)=(\phi,1,0)$ of Theorem \ref{TH2} and remark three relations involving the sum of non-overlined parts counted without multiplicity, in all the overpartitions of $n$. 
In Section \ref{S4}, we show that the number of distinct prime divisors of $n$ can be expressed in terms of the number of prime non-overlined  parts in all the overpartitions of $n$.
In the last section, as a third application of our results, we provide connections between the number of unitary divisors of $n$ and  the number of squarefree non-overlined parts in all the overpartitions of $n$.

\section{Proof of Theorem \ref{TH2}}
\label{S2}

The bivariate generating function for overpartitions where $z$ keeps track of the number of  non-overlined parts equal to $k$ is given by
$$
\frac{1-q^k}{1-zq^k}\cdot \frac{(-q;q)_\infty}{(q;q)_\infty}.
$$
For $k>0$, we deduce that
\begin{align*}
\sum_{n=0}^\infty S(k,n) q^n =  \frac{d}{dz}\Bigr|_{z=1} \frac{(1-q^k)(-q;q)_\infty}{ (1-zq^k) (q;q)_\infty}=
\frac{q^k}{1-q^k}\cdot \frac{(-q;q)_\infty}{(q;q)_\infty}
\end{align*}
is the generating function for the number of $k$'s in all the overpartitions of $n$. We can write
\begin{align*}
\sum_{k=1}^\infty \left( \sum_{n=1}^\infty S(\alpha k-\beta,n) q^n \right) a_k 
= \frac{(-q;q)_\infty}{(q;q)_\infty} \sum_{k=1}^\infty \frac{a_k q^{\alpha k - \beta}}{1-q^{\alpha k - \beta}}.
\end{align*}
Thus we deduce that the generating function for $A(a,\alpha,\beta;n)$ is given by
\begin{align}
\sum_{n=1}^\infty A(a,\alpha,\beta;n) q^n = \frac{(-q;q)_\infty}{(q;q)_\infty} \sum_{n=1}^\infty \frac{a_n q^{\alpha n - \beta}}{1-q^{\alpha n - \beta}}.\label{GFA}
\end{align}

	In \cite{Andrews18}, the authors considered the theta identity \eqref{eq:Gauss} 
	and proved the following truncated form:
	\begin{align}
& \frac{(-q;q)_{\infty}} {(q;q)_{\infty}} \left(1 + 2 \sum_{j=1}^{k} (-1)^j q^{j^2} \right) \label{eq4} \\
& \qquad = 1+2 (-1)^k \frac{(-q;q)_k}{(q;q)_k} \sum_{j=0}^{\infty} \frac{q^{(k+1)(k+j+1)}(-q^{k+j+2};q)_{\infty}}{(1-q^{k+j+1})(q^{k+j+2};q)_{\infty}}.\notag
\end{align}

Multiplying both sides of \eqref{eq4} by \eqref{LS},
we obtain
\begin{align*}
& (-1)^{k} \left( \bigg( \sum_{n=1}^\infty A(a,\alpha,\beta;n) q^n \bigg) \bigg(1+2  \sum_{n=1}^{k} (-1)^{n} q^{n^2}\bigg) -\sum_{n=1}^\infty B(a,\alpha,\beta;n)q^{n}\right)   \\
& =   \left( \sum_{n=1}^\infty B(a,\alpha,\beta;n) q^{n} \right) \left( \sum_{n=0}^\infty \overline{M}_k(n) q^n\right),
\end{align*}
where we have invoked 
the generating function for $\overline{M}_k(n)$ \cite{Andrews18},
$$
\sum_{n=0}^\infty \overline{M}_k(n) q^n = 
\frac{2(-q;q)_k}{(q;q)_k} \sum_{j=0}^{\infty} \frac{q^{(k+1)(k+j+1)}(-q^{k+j+2};q)_{\infty}}{(1-q^{k+j+1})(q^{k+j+2};q)_{\infty}}.
$$
The proof follows easily considering Cauchy's multiplication of two power series.

\section{Euler's totient and overpartitions}
\label{S3}

Euler's totient or phi function, $\phi(n)$, is a multiplicative function that counts the totatives of $n$, that is the positive integers less than or equal to $n$ that are relatively prime to $n$. According to Euler's classical formula \cite[Theorem 63]{Hardy79}, we have
$$B(\phi,1,0;n) = n.$$
In addition, by \eqref{GFA} we deduce that
\begin{equation}\label{eq5}
\sum_{n=1}^\infty A(\phi,1,0;n) q^n = \frac{(-q;q)_\infty}{(q;q)_\infty} \sum_{n=1}^\infty nq^n = \frac{(-q;q)_\infty}{(q;q)_\infty}\cdot \frac{q}{ (1-q)^2}.
\end{equation}
Moreover, we have the following combinatorial interpretation of \eqref{eq5}.

\begin{corollary}
	For $n\geqslant 0$, the sum of non-overlined parts counted without multiplicity, in all the overpartitions of $n$ equals
	$$\sum_{j=1}^n j \overline{p}(n-j).$$
\end{corollary}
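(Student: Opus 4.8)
The plan is to give a direct combinatorial reading of the right-hand side of \eqref{eq5}. First I would introduce, for each positive integer $k$, the quantity $\overline{p}_k(n)$ counting the overpartitions of $n$ that contain at least one non-overlined part equal to $k$. The sum of the non-overlined parts counted without multiplicity, taken over all overpartitions of $n$, is then exactly $\sum_{k\geqslant 1} k\,\overline{p}_k(n)$, because a given overpartition contributes the value $k$ precisely once for each size $k$ that occurs among its non-overlined parts, and summing this contribution over all overpartitions of $n$ groups the terms by the occurring size $k$.

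Next I would show that $\overline{p}_k(n) = \overline{p}(n-k)$. One clean way is to reuse the bivariate generating function recorded at the start of Section~\ref{S2}: with $z$ marking the number of non-overlined parts equal to $k$, that generating function is $\tfrac{1-q^k}{1-zq^k}\cdot\tfrac{(-q;q)_\infty}{(q;q)_\infty}$. Setting $z=1$ recovers the generating function $\tfrac{(-q;q)_\infty}{(q;q)_\infty}$ for all overpartitions, whereas $z=0$ gives $(1-q^k)\tfrac{(-q;q)_\infty}{(q;q)_\infty}$, the generating function for those overpartitions with \emph{no} non-overlined part equal to $k$; subtracting, the generating function for $\overline{p}_k(n)$ is $q^k\tfrac{(-q;q)_\infty}{(q;q)_\infty}$, i.e.\ $\overline{p}_k(n)=\overline{p}(n-k)$. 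Equivalently, deleting one non-overlined copy of $k$ is a bijection from the overpartitions of $n$ having a non-overlined $k$ onto all overpartitions of $n-k$, with inverse given by adjoining one non-overlined part of size $k$.

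Combining the two steps yields
$$\sum_{k\geqslant 1} k\,\overline{p}_k(n) \;=\; \sum_{k\geqslant 1} k\,\overline{p}(n-k) \;=\; \sum_{j=1}^n j\,\overline{p}(n-j),$$
the last sum being finite since $\overline{p}$ vanishes at negative arguments. Finally, to connect this with \eqref{eq5}, I would note that by Cauchy multiplication the generating function of $\sum_{j=1}^n j\,\overline{p}(n-j)$ is $\bigl(\sum_{k\geqslant 1} k q^k\bigr)\cdot\tfrac{(-q;q)_\infty}{(q;q)_\infty}=\tfrac{q}{(1-q)^2}\cdot\tfrac{(-q;q)_\infty}{(q;q)_\infty}$, which is precisely the right-hand side of \eqref{eq5}, namely $\sum_{n\geqslant 1} A(\phi,1,0;n)q^n$; hence the displayed sum is $A(\phi,1,0;n)$ and the corollary is exactly the promised combinatorial interpretation of \eqref{eq5}.

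There is no genuine obstacle here; the only point needing care is the bookkeeping in the step $\overline{p}_k(n)=\overline{p}(n-k)$ — one must check that adjoining, respectively deleting, a single non-overlined part of size $k$ is well defined on overpartitions whether or not a part of size $k$ (overlined or not) is already present — and this is exactly what the specializations $z=0$ and $z=1$ of the bivariate generating function encode.
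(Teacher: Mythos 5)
Your proof is correct, but it takes a more combinatorial route than the paper. The paper works globally: it writes the bivariate generating function $\frac{(-q;q)_\infty}{(q;q)_\infty}\prod_{j\geqslant 1}\bigl(1+(z^j-1)q^j\bigr)$ in which the exponent of $z$ records the sum of the distinct non-overlined part sizes, applies $\frac{d}{dz}\bigr|_{z=1}$ to get $\frac{(-q;q)_\infty}{(q;q)_\infty}\cdot\frac{q}{(1-q)^2}$, and then reads off the convolution by Cauchy multiplication against \eqref{eq5}. You instead decompose the statistic part-size by part-size, writing it as $\sum_{k\geqslant 1}k\,\overline{p}_k(n)$ with $\overline{p}_k(n)$ the number of overpartitions of $n$ containing a non-overlined $k$, and then prove the shift $\overline{p}_k(n)=\overline{p}(n-k)$ either by the single-$k$ bivariate generating function from Section~\ref{S2} or by the delete/adjoin bijection (which is indeed well defined: removing one non-overlined copy of $k$, or adjoining one, never disturbs the at-most-one-overlined-copy-per-size constraint). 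Your version is more elementary and makes the identity bijectively transparent, at the cost of an interchange-of-summation step; the paper's version is shorter and produces the generating function \eqref{eq5} directly, which it needs anyway for Corollaries \ref{C61} and \ref{C6.2}. The two are of course close kin — the derivative of the paper's product at $z=1$ is precisely the sum over $k$ of your single-$k$ computations — but the organization of the argument is genuinely different.
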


\begin{proof}
	The bivariate generating function for overpartitions where $z$ keeps track of parts which are not overlined without multiplicity is
	\begin{align*}
	& (-q;q)_\infty \prod_{j=1}^\infty\big(1+z^{j}(q^{j}+q^{2j}+q^{3j}+\cdots) \big) 
	 = \frac{(-q;q)_\infty}{(q;q)_\infty} \prod_{j=1}^\infty \big(1+(z^{j}-1) q^{j} \big).
	\end{align*} 
	Applying $\frac{d}{dz}\Bigr|_{z=1}$
	to this equation, we obtain the expression of the generating function for the sum  of non-overlined parts counted without multiplicity, in all the overpartitions of $n$:
	$$
	\frac{(-q;q)_\infty}{(q;q)_\infty} \sum_{j=1}^\infty jq^{j} 
	= \frac{(-q;q)_\infty}{(q;q)_\infty}\cdot \frac{q}{ (1-q)^2}.
	$$
\end{proof}

The following result is a consequence of Theorem \ref{TH2}.

\begin{corollary}\label{C61}
	For $k,n>0$, the sum of non-overlined parts counted without multiplicity, in all the overpartitions of $n$ satisfies the following identity:
	\begin{align*}
	& (-1)^{k} \left(A(\phi,1,0;n)+2 \sum_{j=1}^k (-1)^j A\big(\phi,1,0;n-j^2\big) - n \right) 
	= \sum_{j=1}^n j \overline{M}_k(n-j).
	\end{align*}
\end{corollary}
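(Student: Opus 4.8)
The plan is simply to specialize Theorem~\ref{TH2} to $(a,\alpha,\beta)=(\phi,1,0)$, where $\phi$ is Euler's totient, and then identify each of the three terms with the quantities appearing in the statement. First I would invoke Euler's classical identity $B(\phi,1,0;m)=\sum_{d\mid m}\phi(d)=m$, valid for every positive integer $m$ (this is the relation $B(\phi,1,0;n)=n$ recorded at the start of Section~\ref{S3}, \cite[Theorem 63]{Hardy79}). Substituting this into the right-hand side of Theorem~\ref{TH2} collapses $\sum_{j=1}^n B(\phi,1,0;j)\,\overline{M}_k(n-j)$ into $\sum_{j=1}^n j\,\overline{M}_k(n-j)$, which is exactly the right-hand side we want. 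On the left-hand side, the isolated term $-B(\phi,1,0;n)$ becomes $-n$, while the terms $A(\phi,1,0;n)$ and $A(\phi,1,0;n-j^2)$ are left untouched; this already yields the displayed identity verbatim.

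The only thing that then needs a sentence is the combinatorial reading of $A(\phi,1,0;n)$: by \eqref{GFA} and $\sum_{n\geqslant 1} nq^n = q/(1-q)^2$ we have the generating function \eqref{eq5} for $A(\phi,1,0;n)$, and this is precisely the generating function derived in the proof of the preceding corollary for the sum of non-overlined parts counted without multiplicity in all the overpartitions of $n$. Hence $A(\phi,1,0;n)$ equals that statistic, and the identity of Corollary~\ref{C61} is exactly the $(\phi,1,0)$-instance of Theorem~\ref{TH2} rewritten in this language. There is no real obstacle here: the result is a pure specialization, and the ``hard part'' — the truncated-theta computation and the Cauchy-product bookkeeping — has already been carried out in the proof of Theorem~\ref{TH2}. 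One should only take minor care that Euler's formula is applied at every index $j$ in the convolution sum, not merely at $j=n$, and that the convention $\overline{p}(m)=\overline{M}_k(m)=0$ for $m<0$ (equivalently, the empty-sum conventions) is consistent with the ranges of summation.
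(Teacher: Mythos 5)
Your proposal is correct and matches the paper's (implicit) argument exactly: the paper derives Corollary~\ref{C61} as a direct specialization of Theorem~\ref{TH2} at $(a,\alpha,\beta)=(\phi,1,0)$, using $B(\phi,1,0;m)=m$ and the combinatorial interpretation of $A(\phi,1,0;n)$ established in the preceding corollary. Nothing is missing.
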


Relevant to this corollary, it would be very appealing to have a combinatorial interpretation of
$$\sum_{j=1}^n j \overline{M}_k(n-j).$$

The limiting case $k\to\infty$ of Corollary \ref{C61} reads as follows.

\begin{corollary}\label{C6.2}
	For $n>0$, the sum of non-overlined parts counted without multiplicity, in all the overpartitions of $n$ satisfies the following linear recurrence relation:
	\begin{align*}
	& A(\phi,1,0;n)+2 \sum_{j=1}^\infty (-1)^j A\big(\phi,1,0;n-j^2\big) = n.
	\end{align*}
\end{corollary}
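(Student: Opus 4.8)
The plan is to obtain Corollary \ref{C6.2} as the limiting case $k\to\infty$ of Corollary \ref{C61}, which is itself the specialization $(a,\alpha,\beta)=(\phi,1,0)$ of Theorem \ref{TH2}. First I would recall from Section \ref{S3} that $B(\phi,1,0;n)=n$ by Euler's classical formula, so that the right-hand side of Corollary \ref{C61}, namely $\sum_{j=1}^n j\,\overline{M}_k(n-j)$, is exactly $\sum_{j=1}^n B(\phi,1,0;j)\,\overline{M}_k(n-j)$, in accordance with the statement of Theorem \ref{TH2}. Thus Corollary \ref{C61} really is Theorem \ref{TH2} with these parameters, and Corollary \ref{C6.2} is its $k\to\infty$ limit.

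Next I would argue that letting $k\to\infty$ is legitimate termwise. For fixed $n$, the sum $\sum_{j=1}^k(-1)^j A(\phi,1,0;n-j^2)$ stabilizes once $k^2>n$, since $A(\phi,1,0;m)=0$ for $m<0$ (the generating function \eqref{eq5} has no negative powers of $q$), so the infinite sum $\sum_{j=1}^\infty(-1)^j A(\phi,1,0;n-j^2)$ is really a finite sum and equals $\sum_{j=1}^k(-1)^j A(\phi,1,0;n-j^2)$ for all $k$ with $k^2\geqslant n$. On the right-hand side, $\overline{M}_k(n-j)$ counts overpartitions of $n-j$ in which the first part exceeding $k$ occurs at least $k+1$ times; for $k\geqslant n-j$ there is no part exceeding $k$ in any overpartition of $n-j$, so $\overline{M}_k(n-j)=0$ for all such $k$, hence $\sum_{j=1}^n j\,\overline{M}_k(n-j)=0$ once $k\geqslant n$ (equivalently, as noted in Corollary \ref{C3}, the expression vanishes unless $n\geqslant(k+1)^2$). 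Therefore, choosing any $k$ with $k\geqslant n$, the identity of Corollary \ref{C61} becomes
$$(-1)^k\left(A(\phi,1,0;n)+2\sum_{j=1}^\infty(-1)^j A(\phi,1,0;n-j^2)-n\right)=0,$$
and dividing by $(-1)^k$ gives the claimed recurrence.

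Alternatively, and perhaps more cleanly, one can read off Corollary \ref{C6.2} directly as the case $(a,\alpha,\beta)=(\phi,1,0)$ of Corollary \ref{TH1}, using once more $B(\phi,1,0;n)=n$; Corollary \ref{TH1} has already been established in the excerpt as the $k\to\infty$ form of Theorem \ref{TH2}. Either route is short. The only point requiring a word of care — the ``main obstacle,'' though it is a mild one — is justifying that the infinite series $\sum_{j=1}^\infty(-1)^j A(\phi,1,0;n-j^2)$ is a genuine (finite) object and that the truncation on the $\overline{M}_k$ side vanishes for large $k$; both follow immediately from the observation that $A(\phi,1,0;m)=0$ and $\overline{M}_k(m)=0$ behave as indicated for the relevant ranges of the indices. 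No delicate convergence analysis is needed because everything is finitely supported for fixed $n$.
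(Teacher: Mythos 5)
Your proposal is correct and matches the paper's approach: the paper obtains Corollary \ref{C6.2} precisely as the limiting case $k\to\infty$ of Corollary \ref{C61} (equivalently, as the $(\phi,1,0)$ specialization of Corollary \ref{TH1}), using $B(\phi,1,0;n)=n$. Your additional remarks justifying that all sums are finitely supported and that $\overline{M}_k(n-j)$ vanishes for large $k$ only make explicit what the paper leaves tacit.
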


We remark that $A(\phi,1,0;n)$ and $n$ have the same parity.

\section{Prime parts in overpartitions}
\label{S4}

In this section, we consider the characteristic function of the prime numbers, namely
$$
\chi(n)=
\begin{cases}
0, & \text{if $n$ is composite,}\\
1, & \text{if $n$ is prime.}
\end{cases}
$$
It is clear that 
$B(\chi,\alpha,\beta;n)$
counts the prime divisors of $n$ which are congruent to $-\beta$ modulo $\alpha$, 
while $A(\chi,\alpha,\beta;n)$ counts in all the overpartitions of $n$ the prime parts of the form $\alpha k -\beta$, where $k$ is a prime.

On the other hand, it is clear that $A(\chi,1,0;n)$ counts the prime non-overlined parts in all the overpartitions of $n$ and
$$B(\chi,1,0;n)=\omega(n)$$ 
where $\omega(n)$ is a classical additive function defined as the number of distinct primes dividing $n$.

We remark the following results.

\begin{corollary}
	For $n\geqslant 0$, the number of non-overlined prime parts in all the overpartitions of $n$ equals
	$$\sum_{j=2}^n \omega(j) \overline{p}(n-j).$$
\end{corollary}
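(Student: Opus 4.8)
The plan is to read off the result from the generating function \eqref{GFA} specialized to $(a,\alpha,\beta)=(\chi,1,0)$. As already observed in the text, $A(\chi,1,0;n)$ is exactly the number of non-overlined prime parts counted over all overpartitions of $n$, and $B(\chi,1,0;n)=\sum_{d\mid n}\chi(d)=\omega(n)$. Hence the statement amounts to identifying the $q^n$-coefficient on the two sides of an identity of power series.

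Concretely, I would first use the Lambert series expansion \eqref{LS} to write
$$\sum_{n=1}^\infty \frac{\chi(n)\,q^{n}}{1-q^{n}} = \sum_{n=1}^\infty B(\chi,1,0;n)\,q^{n} = \sum_{n=2}^\infty \omega(n)\,q^{n},$$
where the series may be started at $n=2$ because $\omega(1)=0$. Substituting this into \eqref{GFA} together with $\dfrac{(-q;q)_\infty}{(q;q)_\infty}=\sum_{n=0}^\infty \overline{p}(n)\,q^n$ gives
$$\sum_{n=1}^\infty A(\chi,1,0;n)\,q^n = \left(\sum_{n=0}^\infty \overline{p}(n)\,q^n\right)\left(\sum_{n=2}^\infty \omega(n)\,q^n\right).$$
Comparing coefficients of $q^n$ via Cauchy's multiplication of the two power series yields $A(\chi,1,0;n)=\sum_{j=2}^n \omega(j)\,\overline{p}(n-j)$, which is the claimed formula; this is precisely the instance $f=\chi$, $\alpha=1$, $\beta=0$ of \eqref{F}.

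There is essentially no obstacle here: the only points requiring a little care are the bookkeeping of the summation ranges — the sum over $j$ may be started at $2$ since $\omega(1)=0$, and the term $j=n$ with $\overline{p}(0)=1$ contributes $\omega(n)$ — and the degenerate case $n=0$, where both sides are empty sums equal to $0$.
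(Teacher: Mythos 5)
Your proposal is correct and follows exactly the route the paper intends: the corollary is the instance $f=\chi$, $\alpha=1$, $\beta=0$ of the general identity \eqref{F}, obtained by specializing the generating function \eqref{GFA}, using $B(\chi,1,0;n)=\omega(n)$, and taking the Cauchy product with $\sum_{n\geqslant 0}\overline{p}(n)q^n$. Your bookkeeping of the summation range (starting at $j=2$ since $\omega(1)=0$) is also correct.
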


For example, the overpartitions of $5$ are:
$(5)$, $(\overline{5})$, 
$(4,1)$, $(\overline{4},1)$, $(4,\overline{1})$, $(\overline{4},\overline{1})$, 
$(3,2)$, $(\overline{3},2)$, $(3,\overline{2})$, $(\overline{3},\overline{2})$, 
$(3,1,1)$, $(\overline{3},1,1)$, $(3,\overline{1},1)$, $(\overline{3},\overline{1},1)$, 
$(2,2,1)$, $(\overline{2},2,1)$, $(2,2,\overline{1})$, $(\overline{2},2,\overline{1})$,
$(2,1,1,1)$, $(\overline{2},1,1,1)$, $(2,\overline{1},1,1)$, $(\overline{2},\overline{1},1,1)$, 
$(1,1,1,1,1)$ and $(\overline{1},1,1,1,1)$.
The number of prime parts which are not overlined  in all the overpartitions of $5$ is equal to 
$1+0+0+0+0+0+2+1+1+0+1+0+1+0+2+1+2+1+1+0+1+0+0+0=15$. On the other hand, we have
$\overline{p}(3)+\overline{p}(2)+\overline{p}(1)+\overline{p}(0) = 8+4+2+1=15$.

\begin{corollary}\label{C4.2}
	For $k,n>0$, the number of prime non-overlined parts in all the overpartitions of $n$ and the number of distinct prime divisors of $n$ satisfies the identity:
	\begin{align*}
	& (-1)^{k} \left(A(\chi,1,0;n)+2 \sum_{j=1}^k (-1)^j A\big(\chi,1,0;n-j^2\big) - \omega(n) \right) \\
	& = \sum_{j=2}^n \omega(j) \overline{M}_k(n-j).
	\end{align*}
\end{corollary}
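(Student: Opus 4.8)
The plan is to recognize Corollary \ref{C4.2} as nothing more than the specialization $(a,\alpha,\beta)=(\chi,1,0)$ of Theorem \ref{TH2}, once we have identified the two key numerical facts about $\chi$. First I would recall that $B(\chi,1,0;n)=\sum_{d\mid n}\chi(d)$, and since $\chi(d)=1$ precisely when $d$ is prime, this sum counts the distinct prime divisors of $n$, i.e.\ $B(\chi,1,0;n)=\omega(n)$; this is already noted in the text just above the statement. Likewise $A(\chi,1,0;n)=\sum_{k\geqslant 1}S(k,n)\chi(k)$ counts the non-overlined prime parts among all overpartitions of $n$, which is exactly the quantity named in the corollary.

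Next I would substitute these identifications directly into the conclusion of Theorem \ref{TH2}. With $\alpha=1$, $\beta=0$, $a=\chi$, the left-hand side of Theorem \ref{TH2} becomes
\[
(-1)^{k}\left(A(\chi,1,0;n)+2\sum_{j=1}^{k}(-1)^{j}A\big(\chi,1,0;n-j^{2}\big)-\omega(n)\right),
\]
and the right-hand side becomes $\sum_{j=1}^{n}B(\chi,1,0;j)\,\overline{M}_k(n-j)=\sum_{j=1}^{n}\omega(j)\,\overline{M}_k(n-j)$. The only remaining cosmetic point is the lower limit of the final sum: since $\omega(1)=0$ (the integer $1$ has no prime divisors), the $j=1$ term vanishes, so $\sum_{j=1}^{n}\omega(j)\,\overline{M}_k(n-j)=\sum_{j=2}^{n}\omega(j)\,\overline{M}_k(n-j)$, which matches the stated form. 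This completes the derivation.

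There is essentially no obstacle here: the result is a direct corollary, and the only thing to be careful about is confirming that $B(\chi,1,0;n)$ really equals $\omega(n)$ rather than, say, the number of prime-power divisors or prime factors counted with multiplicity — but $B(\chi,1,0;n)=\sum_{d\mid n}\chi(d)$ runs over \emph{divisors}, and a divisor $d$ of $n$ contributes exactly when $d$ itself is a prime, so each prime $p\mid n$ is counted once, giving $\omega(n)$ as claimed. One could alternatively present the proof through generating functions: by \eqref{GFA} together with the Lambert series $\sum_{n\geqslant1}\chi(n)q^n/(1-q^n)=\sum_{n\geqslant1}\omega(n)q^n$, one recovers the same statement by multiplying by the truncated Gauss identity \eqref{eq4} and reading off coefficients, exactly as in the proof of Theorem \ref{TH2}; but invoking Theorem \ref{TH2} as a black box is cleaner and is the natural path to take.
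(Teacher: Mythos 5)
Your proof is correct and follows exactly the paper's (implicit) route: Corollary \ref{C4.2} is the specialization $(a,\alpha,\beta)=(\chi,1,0)$ of Theorem \ref{TH2}, using $B(\chi,1,0;n)=\sum_{d\mid n}\chi(d)=\omega(n)$ and dropping the vanishing $j=1$ term since $\omega(1)=0$. Nothing is missing.
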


The limiting case $k\to\infty$ of this corollary provides the following decomposition of $\omega(n)$.

\begin{corollary}
	For $n>0$, 
	\begin{align*}
	A(\chi,1,0;n)+2 \sum_{j=1}^k (-1)^j A\big(\chi,1,0;n-j^2\big) = \omega(n) .
	\end{align*}
\end{corollary}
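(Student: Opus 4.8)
The plan is to derive this corollary from results already in hand rather than from scratch, since it is the limiting case $k\to\infty$ of Corollary~\ref{C4.2} and, equally, the specialization $(a,\alpha,\beta)=(\chi,1,0)$ of Corollary~\ref{TH1}. Taking the second route, I would apply Corollary~\ref{TH1} to the sequence $a_m=\chi(m)$ with $\alpha=1$ and $\beta=0$. Its right-hand side is then $B(\chi,1,0;n)$, which counts the divisors $d$ of $n$ with $\chi(d)=1$, i.e.\ the prime divisors of $n$; this is $\omega(n)$, exactly the identification recorded in the text. Its left-hand side is $A(\chi,1,0;n)+2\sum_{j\geqslant 1}(-1)^j A(\chi,1,0;n-j^2)$, the sum over $j$ being effectively finite because $A(\chi,1,0;m)=0$ for $m\leqslant 0$. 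That is already the claim. (The upper limit of the sum in the displayed statement should be read as $\infty$ rather than $k$.)

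Alternatively, and perhaps more in keeping with the presentation of the earlier corollaries, I would take the limit $k\to\infty$ in Corollary~\ref{C4.2} directly. Fix $n>0$. The generating function for $\overline{M}_k$ quoted in Section~\ref{S2} has lowest-order term $q^{(k+1)^2}$, so $\overline{M}_k(m)=0$ for $m<(k+1)^2$; hence once $k$ is large enough that $(k+1)^2>n$, each $\overline{M}_k(n-j)$ with $1\leqslant j\leqslant n$ vanishes and the right-hand side of Corollary~\ref{C4.2} is $0$. For the same $k$, the truncated alternating sum $\sum_{j=1}^{k}(-1)^j A(\chi,1,0;n-j^2)$ equals the full sum $\sum_{j\geqslant 1}(-1)^j A(\chi,1,0;n-j^2)$, since $A(\chi,1,0;m)=0$ for $m<1$. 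Thus $(-1)^k\big(A(\chi,1,0;n)+2\sum_{j\geqslant 1}(-1)^j A(\chi,1,0;n-j^2)-\omega(n)\big)=0$, and dividing by $(-1)^k$ finishes the argument.

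For completeness I would also note a fully self-contained generating-function proof: by \eqref{GFA}, $\sum_{n\geqslant 1}A(\chi,1,0;n)q^n=\frac{(-q;q)_\infty}{(q;q)_\infty}\sum_{n\geqslant 1}\frac{\chi(n)q^n}{1-q^n}=\frac{(-q;q)_\infty}{(q;q)_\infty}\sum_{m\geqslant 1}\omega(m)q^m$, using $B(\chi,1,0;m)=\omega(m)$; multiplying by Gauss's identity \eqref{eq:Gauss} in the form $\frac{(q;q)_\infty}{(-q;q)_\infty}=1+2\sum_{j\geqslant 1}(-1)^j q^{j^2}$ and comparing coefficients of $q^n$ recovers the recurrence. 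I do not anticipate any real obstacle: the substantive content is entirely contained in Theorem~\ref{TH2} / Corollary~\ref{TH1}, and the only things to check are the bookkeeping facts $B(\chi,1,0;n)=\omega(n)$ and the vanishing of $A(\chi,1,0;m)$ and $\overline{M}_k(m)$ outside their supports, all immediate from the definitions and the generating functions already displayed.
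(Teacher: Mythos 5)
Your proposal is correct and matches the paper's route exactly: the paper presents this corollary as the limiting case $k\to\infty$ of Corollary \ref{C4.2} (equivalently, the specialization $(a,\alpha,\beta)=(\chi,1,0)$ of Corollary \ref{TH1}), and your justification via the vanishing of $\overline{M}_k(m)$ for $m<(k+1)^2$ and of $A(\chi,1,0;m)$ for $m\leqslant 0$ is precisely the bookkeeping the paper leaves implicit. You are also right that the upper limit $k$ in the displayed sum is a typo for $\infty$.
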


Similar results can be obtained for the sum of distinct prime divisors of $n$ and the sum of prime non-overlined parts in all the overpartitions of $n$ if we replace the function $\chi(n)$ by  $n\cdot \chi(n)$.

Related to Corollary \ref{C4.2}, it is still an open problem to give partition-theoretic interpretations for
$$\sum_{j=2}^n \omega(j) \overline{M}_k(n-j).$$

\section{Squarefree parts in overpartitions}

Taking into account that
$$
|\mu(n)|=
\begin{cases}
0, & \text{if $n$ has a squared prime factor,}\\
1, & \text{otherwise,}
\end{cases}
$$
we deduce that $B(|\mu|,\alpha,\beta;n)$ counts the divisors of $n$ of the form $\alpha d -\beta$ where $d$ is squarefree.
In addition, $A(|\mu|,\alpha,\beta;n)$ counts in all the overpartitions of $n$ the parts which are not overlined and are of the form  $\alpha k -\beta$, where $k$ is a squarefree.

Recall that a natural number $d$ is a \textit{unitary divisor} of a number $n$ if $d$ is a divisor of $n$ and if $d$ and $n/d$ are coprime.
The sum over all the positive divisors of $n$ of the absolute value of the M\"{o}bius function is equal to the number of unitary divisors of $n$ \cite[Theorem 264]{Hardy79},
$$B(|\mu|,1,0;n) =2^{\omega(n)}.$$
On the other hand, it is clear that $B(|\mu|,1,0;n)$ counts  the squarefree divisors of $n$, while $A(|\mu|,1,0;n)$ counts the squarefree  non-overlined parts in all the overpartitions of $n$. 

We have the following particular results.

\begin{corollary}
	For $n\geqslant 0$, the number of squarefree non-overlined parts in all the overpartitions of $n$ equals
	$$\sum_{j=1}^n 2^{\omega(j)} \overline{p}(n-j).$$
\end{corollary}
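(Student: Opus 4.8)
The plan is to read the identity off the generating function \eqref{GFA} specialized to $(a,\alpha,\beta)=(|\mu|,1,0)$, exactly in the spirit of the analogous corollaries in Sections \ref{S3} and \ref{S4}. First I would record that, by definition, $A(|\mu|,1,0;n)=\sum_{k=1}^\infty S(k,n)\,|\mu(k)|$ is precisely the number of squarefree non-overlined parts in all the overpartitions of $n$, since $|\mu(k)|$ is the characteristic function of the squarefree integers and $S(k,n)$ counts the occurrences of $k$ as a non-overlined part. Substituting $(a,\alpha,\beta)=(|\mu|,1,0)$ into \eqref{GFA} then gives
$$
\sum_{n=1}^\infty A(|\mu|,1,0;n)\,q^n = \frac{(-q;q)_\infty}{(q;q)_\infty}\sum_{n=1}^\infty\frac{|\mu(n)|\,q^{n}}{1-q^{n}}.
$$

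The key step is to identify the Lambert series on the right. By \eqref{LS} together with the formula $B(|\mu|,1,0;n)=2^{\omega(n)}$ recorded above (Theorem 264 of \cite{Hardy79}), one has $\sum_{n=1}^\infty \frac{|\mu(n)|\,q^n}{1-q^n}=\sum_{n=1}^\infty 2^{\omega(n)}q^n$. Using the generating function $\sum_{n\geqslant 0}\overline{p}(n)q^n=(-q;q)_\infty/(q;q)_\infty$, this yields
$$
\sum_{n=1}^\infty A(|\mu|,1,0;n)\,q^n = \left(\sum_{n=0}^\infty \overline{p}(n)q^n\right)\left(\sum_{n=1}^\infty 2^{\omega(n)}q^n\right).
$$
Comparing the coefficient of $q^n$ on both sides via Cauchy multiplication gives $A(|\mu|,1,0;n)=\sum_{j=1}^n 2^{\omega(j)}\overline{p}(n-j)$, which is the claim; equivalently, this is the instance $f=|\mu|$ of the universal identity \eqref{F}.

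There is essentially no obstacle here: the only point requiring care is the bookkeeping at the boundary of the sum, namely that the right-hand series starts at $j=1$ because $\sum_n 2^{\omega(n)}q^n$ has no constant term ($\omega$ being defined only for $n\geqslant 1$, with $2^{\omega(1)}=2^0=1$ contributing the $j=1$ term), while $\overline{p}(0)=1$ supplies the term at $j=n$. A quick numerical check for small $n$ against the explicit list of overpartitions—e.g.\ $A(|\mu|,1,0;4)=S(1,4)+S(2,4)+S(3,4)=15+5+2=22$ versus $2^{\omega(1)}\overline{p}(3)+2^{\omega(2)}\overline{p}(2)+2^{\omega(3)}\overline{p}(1)+2^{\omega(4)}\overline{p}(0)=1\cdot 8+2\cdot 4+2\cdot 2+2\cdot 1=22$—confirms the indexing.
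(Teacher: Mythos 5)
Your proof is correct and follows exactly the route the paper intends: specialize \eqref{GFA} to $(a,\alpha,\beta)=(|\mu|,1,0)$, use $B(|\mu|,1,0;n)=2^{\omega(n)}$ to rewrite the Lambert series, and extract coefficients via Cauchy multiplication, i.e.\ the instance $f=|\mu|$ of \eqref{F}. The numerical check at $n=4$ is a nice confirmation of the indexing.
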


For example, the number of squarefree non-overlined parts in all the overpartitions of $5$ is equal to 
$1+0+1+1+0+0+2+1+1+0+3+2+2+1+3+2+2+1+4+3+3+2+5+4=44$. On the other hand, we have
$\overline{p}(4)+2\overline{p}(3)+2\overline{p}(2)+2\overline{p}(1)+2\overline{p}(0) = 14+16+8+4+2=44$.

\begin{corollary}
	For $k,n>0$, the number of squarefree non-overlined parts in all the overpartitions of $n$ and the number of unitary divisors of $n$ satisfies the identity:
	\begin{align*}
	& (-1)^{k} \left(A(|\mu|,1,0;n)+2 \sum_{j=1}^k (-1)^j A\big(|\mu|,1,0;n-j^2\big) - 2^{\omega(n)} \right) \\
	& = \sum_{j=1}^n 2^{\omega(j)} \overline{M}_k(n-j).
	\end{align*}
\end{corollary}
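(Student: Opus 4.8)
The plan is to obtain this corollary as the direct specialization $(a,\alpha,\beta)=(|\mu|,1,0)$ of Theorem~\ref{TH2}. First I would apply Theorem~\ref{TH2} with the sequence $a_m=|\mu(m)|$ together with $\alpha=1$, $\beta=0$. With these choices $A(a,\alpha,\beta;n)$ becomes $A(|\mu|,1,0;n)$, which (as recalled at the start of this section) is $\sum_{k\geqslant 1}S(k,n)|\mu(k)|$, i.e.\ the number of squarefree non-overlined parts in all the overpartitions of $n$; the shifted arguments $n-j^2$ carry over verbatim. It then remains only to identify $B(a,\alpha,\beta;n)$ in this case on both sides of the identity.

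Next I would invoke the evaluation
$$
B(|\mu|,1,0;n)=\sum_{d\mid n}|\mu(d)|=2^{\omega(n)},
$$
which is exactly \cite[Theorem 264]{Hardy79}: combinatorially, a squarefree divisor of $n$ is obtained by choosing a subset of the $\omega(n)$ distinct prime divisors of $n$, so there are $2^{\omega(n)}$ of them. Substituting $B(|\mu|,1,0;m)=2^{\omega(m)}$ into the left-hand side of Theorem~\ref{TH2} replaces the term $B(a,\alpha,\beta;n)$ by $2^{\omega(n)}$, and on the right-hand side it turns $\sum_{j=1}^{n}B(a,\alpha,\beta;j)\overline{M}_k(n-j)$ into $\sum_{j=1}^{n}2^{\omega(j)}\overline{M}_k(n-j)$, which is precisely the claimed expression.

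I expect no genuine obstacle here: the analytic content is already carried out in the proof of Theorem~\ref{TH2} in Section~\ref{S2}, and the only additional input is the classical count of unitary (equivalently squarefree) divisors of $n$. The one point worth a word of care is the lower limit of the summation on the right: since $2^{\omega(j)}\geqslant 1$ for every $j\geqslant 1$ (in contrast to $\omega(j)$, which vanishes at $j=1$), the sum genuinely runs from $j=1$, matching the form in Theorem~\ref{TH2}, and nonnegativity of the coefficients $2^{\omega(j)}$ is what makes this consistent with Corollary~\ref{C3}.
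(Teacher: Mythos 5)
Your proposal is correct and matches the paper's (implicit) derivation exactly: the corollary is the specialization $(a,\alpha,\beta)=(|\mu|,1,0)$ of Theorem~\ref{TH2} combined with the evaluation $B(|\mu|,1,0;n)=\sum_{d\mid n}|\mu(d)|=2^{\omega(n)}$ from \cite[Theorem 264]{Hardy79}, which is precisely the input the paper cites just before stating the result.
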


On the other hand, we have
$$\sum_{d|n} (-1)^{1+d} |\mu(d)| =
\begin{cases}
2^{\omega(n)}, &\text{for $n$ odd,}\\
0, & \text{for $n$ even,}
\end{cases}
$$
while 
$$A\big((-1)^{1+d}|\mu(d)|,1,0;n\big)=\sum_{k=1}^n (-1)^{k+1} S(k,n) |\mu(k)|$$ 
is the difference between the number of squarefree  non-overlined odd parts and the number of squarefree  non-overlined even parts in all the overpartitions of $n$. 
For example, the number of squarefree  non-overlined odd parts in all the overpartitions of $5$ is equal to $34$, while  
the number of squarefree  non-overlined even parts in all the overpartitions of $5$ equals $10$. We see that
$$S(1,5)-S(2,5)+S(3,5)+S(5,5) = 29 - 10 + 4 + 1 = 24$$
equals the difference $34-10$.
We provide new connections between squarefree non-overlined parts and unitary divisors.

\begin{corollary}
	Let $k$ be a positive integer. For $n$ odd, the difference between the number of squarefree  non-overlined odd parts and the number of squarefree  non-overlined even parts in all the overpartitions of $n$ satisfies the following two identity:
	\begin{align*}
	& (-1)^{k} \left(A\big((-1)^{1+d}|\mu(d)|,1,0;n\big) + 2\sum_{j=1}^k (-1)^j A\big((-1)^{1+d}|\mu(d)|,1,0;n-j^2\big) - 2^{\omega(n)} \right) \\
	& = \sum_{j=1}^{(n+1)/2} 2^{\omega(2j-1)} \overline{M}_k(n-2j+1).
	\end{align*}
\end{corollary}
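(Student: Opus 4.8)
The plan is to obtain this corollary as a direct specialization of Theorem~\ref{TH2} to the parameters $(a,\alpha,\beta)=\big((-1)^{1+d}|\mu(d)|,\,1,\,0\big)$. First I would record, from the coefficient formula accompanying \eqref{LS}, that here
$B(a,1,0;j)=\sum_{d\mid j}(-1)^{1+d}|\mu(d)|$, which is exactly the divisor sum evaluated just before the statement: it equals $2^{\omega(j)}$ when $j$ is odd and $0$ when $j$ is even. Since $n$ is assumed odd, this gives $B(a,1,0;n)=2^{\omega(n)}$, so the left-hand side of Theorem~\ref{TH2} with these parameters is verbatim the left-hand side of the corollary, with $A\big((-1)^{1+d}|\mu(d)|,1,0;m\big)$ in place of the generic $A(a,\alpha,\beta;m)$.

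Next I would treat the right-hand side $\sum_{j=1}^{n} B(a,1,0;j)\,\overline{M}_k(n-j)$ of Theorem~\ref{TH2}. Because $B(a,1,0;j)$ vanishes at every even $j$, only odd indices contribute, and substituting $j=2i-1$ for $i=1,\dots,(n+1)/2$ (the top value $j=n$ being attained precisely because $n$ is odd) turns this into $\sum_{i=1}^{(n+1)/2}2^{\omega(2i-1)}\,\overline{M}_k(n-2i+1)$, which is the right-hand side of the corollary after renaming the summation variable to $j$. To finish, I would invoke the combinatorial reading of $A\big((-1)^{1+d}|\mu(d)|,1,0;m\big)$ noted in the text — namely the difference between the number of squarefree non-overlined odd parts and the number of squarefree non-overlined even parts in all overpartitions of $m$ — in order to phrase the identity in the stated language.

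I do not expect any genuine obstacle here; the content is entirely in the evaluation of the divisor sum, which is already recorded. The only point needing a little attention is that the hypothesis ``$n$ odd'' is used twice and for different reasons: once on the left, to make the subtracted term equal $2^{\omega(n)}$ (for even $n$ it would be $0$), and once on the right, to guarantee that the reindexing $j\mapsto 2i-1$ runs exactly up to $i=(n+1)/2$; and that the vanishing of $B(a,1,0;j)$ at even $j$ is what licenses discarding those terms from the $\overline{M}_k$-convolution. One could also derive the corollary straight from \eqref{GFA} together with the truncated Gauss identity, but the specialization of Theorem~\ref{TH2} is the shortest route.
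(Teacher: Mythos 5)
Your proposal is correct and follows exactly the route the paper intends: specialize Theorem~\ref{TH2} to $(a,\alpha,\beta)=\big((-1)^{1+d}|\mu(d)|,1,0\big)$, evaluate $B$ via the divisor sum recorded just before the corollary (equal to $2^{\omega(j)}$ for odd $j$ and $0$ for even $j$), and reindex the convolution over the surviving odd indices. Nothing is missing.
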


\begin{corollary}
	Let $k$ be a positive integer. For $n$ even, the difference between the number of squarefree  non-overlined odd parts and the number of squarefree  non-overlined even parts in all the overpartitions of $n$ satisfies the following identity:
\begin{align*}
& (-1)^{k} \left(A\big((-1)^{1+d}|\mu(d)|,1,0;n\big) + 2\sum_{j=1}^k (-1)^j A\big((-1)^{1+d}|\mu(d)|,1,0;n-j^2\big) \right) \\
& = \sum_{j=1}^{(n+1)/2} 2^{\omega(2j-1)} \overline{M}_k(n-2j+1).
\end{align*}
\end{corollary}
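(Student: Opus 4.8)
The plan is to deduce this corollary directly from Theorem \ref{TH2} by choosing the sequence $a_d=(-1)^{1+d}|\mu(d)|$ together with $\alpha=1$ and $\beta=0$, and then exploiting the parity behaviour of the associated divisor sum. First I would record, using the identity displayed just before the statement, that
\[
B\big((-1)^{1+d}|\mu(d)|,1,0;m\big)=\sum_{d\mid m}(-1)^{1+d}|\mu(d)|=
\begin{cases}
2^{\omega(m)}, & m \text{ odd},\\
0, & m \text{ even}.
\end{cases}
\]
In particular, because $n$ is even, $B\big((-1)^{1+d}|\mu(d)|,1,0;n\big)=0$, so the term $-B(a,\alpha,\beta;n)$ on the left-hand side of Theorem \ref{TH2} simply drops out; this is precisely what distinguishes the present identity from the preceding corollary, where $n$ is odd and that term equals $-2^{\omega(n)}$.

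Next I would rewrite the right-hand side $\sum_{j=1}^{n} B\big((-1)^{1+d}|\mu(d)|,1,0;j\big)\,\overline{M}_k(n-j)$ coming from Theorem \ref{TH2}. Since the coefficient $B(\cdot\,;j)$ vanishes at every even $j$, only the odd indices $j=1,3,\dots,n-1$ contribute, and substituting $j=2i-1$ rewrites this as $\sum_{i}2^{\omega(2i-1)}\overline{M}_k(n-2i+1)$ with $i$ running from $1$ to $n/2$ (written as $(n+1)/2$ in the statement, the extra term carrying $\overline{M}_k$ of a negative argument being zero). Finally I would invoke the combinatorial reading of $A$ already given in the text, namely that $A\big((-1)^{1+d}|\mu(d)|,1,0;m\big)=\sum_{k}(-1)^{k+1}S(k,m)|\mu(k)|$ is the number of squarefree non-overlined odd parts minus the number of squarefree non-overlined even parts in all overpartitions of $m$, so that the left-hand side of Theorem \ref{TH2} takes the form asserted.

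I expect essentially no genuine obstacle here: the result is a one-line specialization of Theorem \ref{TH2}. The only points demanding a little care are the bookkeeping ones—matching the (slightly informal) summation limit $(n+1)/2$ to the set of odd $j\le n$ when $n$ is even, and checking that the vanishing of $B(\cdot\,;n)$ for even $n$ accounts exactly for the absence of the $2^{\omega(n)}$ term that appears in the odd-$n$ companion corollary.
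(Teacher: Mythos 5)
Your proposal is correct and is exactly the derivation the paper intends: specialize Theorem \ref{TH2} to $(a,\alpha,\beta)=\big((-1)^{1+d}|\mu(d)|,1,0\big)$, observe that $B$ vanishes at even arguments (so the $-B(a,\alpha,\beta;n)$ term disappears for even $n$), and reindex the right-hand sum over the odd divisor-indices $j=2i-1$. Your remark about the summation bound $(n+1)/2$ versus $n/2$ correctly resolves the only (harmless) informality in the statement.
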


In addition, considering  the relation \cite[Exercise 1.52]{McCarthy}
$$\sum_{d|n}2^{\omega(d)}=\sigma_0(n^2),$$
where $\sigma_0(n)$ counts the positive divisors of $n$, we can write the following identity.

\begin{corollary}
	For $k,n>0$, 
	\begin{align*}
	& (-1)^{k} \left(A(2^{\omega(n)},1,0;n)+2 \sum_{j=1}^k (-1)^j A\big(2^{\omega(n)},1,0;n-j^2\big) - \sigma_0(n^2) \right) \\
	& = \sum_{j=1}^n \sigma_0(j^2) \overline{M}_k(n-j).
	\end{align*}
\end{corollary}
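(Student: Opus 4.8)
The plan is to apply Theorem~\ref{TH2} directly, with the parameter choices $\alpha = 1$, $\beta = 0$, and the sequence $a_m = 2^{\omega(m)}$ for $m \geqslant 1$. With these choices the quantity $A(a,\alpha,\beta;n)$ becomes $A(2^{\omega(n)},1,0;n) = \sum_{k=1}^{\infty} S(k,n)\, 2^{\omega(k)}$, which is exactly the sum appearing on the left-hand side of the claimed identity, and the recursive terms $A(a,\alpha,\beta;n-j^2)$ become $A(2^{\omega(n)},1,0;n-j^2)$.

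Next I would identify the divisor-sum coefficients $B(a,1,0;j)$. By definition $B(a,\alpha,\beta;j) = \sum_{\alpha d - \beta \mid j} a_d$, so for $\alpha = 1$ and $\beta = 0$ this collapses to $B(a,1,0;j) = \sum_{d \mid j} a_d = \sum_{d \mid j} 2^{\omega(d)}$. Invoking the cited arithmetic identity $\sum_{d \mid n} 2^{\omega(d)} = \sigma_0(n^2)$, we get $B(a,1,0;j) = \sigma_0(j^2)$ for every positive integer $j$; in particular $B(a,1,0;n) = \sigma_0(n^2)$, which supplies the $-\sigma_0(n^2)$ term on the left and the factors $\sigma_0(j^2)$ on the right.

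Finally, substituting these evaluations into the statement of Theorem~\ref{TH2} turns its left-hand side into $(-1)^{k}\bigl(A(2^{\omega(n)},1,0;n) + 2\sum_{j=1}^{k} (-1)^j A(2^{\omega(n)},1,0;n-j^2) - \sigma_0(n^2)\bigr)$ and its right-hand side into $\sum_{j=1}^{n} \sigma_0(j^2)\,\overline{M}_k(n-j)$, which is precisely the asserted identity. Every step is a direct substitution, so there is no real obstacle; the only point that warrants a line of justification is the classical identity $\sum_{d \mid n} 2^{\omega(d)} = \sigma_0(n^2)$, which is cited as \cite[Exercise 1.52]{McCarthy} and follows at once from the multiplicativity of both sides (for a prime power $p^a$, $\sum_{i=0}^{a} 2^{\omega(p^i)} = 1 + 2a = \sigma_0(p^{2a})$).
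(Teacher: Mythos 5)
Your proposal is correct and matches the paper's (implicit) argument exactly: the corollary is obtained by specializing Theorem~\ref{TH2} to $(a,\alpha,\beta)=(2^{\omega},1,0)$ and identifying $B(2^{\omega},1,0;j)=\sum_{d\mid j}2^{\omega(d)}=\sigma_0(j^2)$ via the cited McCarthy identity. Your added verification of that identity by multiplicativity at prime powers is a correct (and welcome) extra detail.
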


Relevant to this corollary, it would be very appealing to have combinatorial interpretations of $$A(2^{\omega(n)},1,0;n)=\sum_{j=1}^n \sigma_0(j^2) \overline{p}(n-j)\qquad\text{and}\qquad \sum_{j=1}^n \sigma_0(j^2) \overline{M}_k(n-j).$$

\bigskip



\begin{thebibliography}{00}


\bibitem{Alladi}
K.~Alladi, P.~Erd\H{o}s, 
On an additive arithmetic function, 
\textit{Pacific J. Math.} \textbf{71}(2) (1977) 275--294.

\bibitem{Andrews98} 
G.~E.~Andrews, 
\textit{The Theory of Partitions}, 
Cambridge Mathematical Library, Cambridge University Press, Cambridge, 1998. Reprint of the 1976 original.

\bibitem{Andrews18} 
G.~E.~Andrews, M.~Merca, 
Truncated theta series and a problem of Guo and Zeng, 
\textit{J. Combin. Theory Ser. A}, \textbf{154} (2018) 610--619.

\bibitem{BM}
C.~Ballantine, M.~Merca, 
New convolutions for the number of divisors, 
\textit{J. Number Theory}, \textbf{170} (2017), 17--34.

\bibitem{Cai}
T.~Cai,
\textit{The book of numbers},
World Scientific Publishing Co. Pte Ltd, New Jersey, 2017. 

\bibitem{Corteel} 
S.~Corteel, J.~Lovejoy,  Overpartitions, 
\textit{Trans. Amer. Math. Soc.}, \textbf{356} (2004), 1623--1635.

\bibitem{Hardy79} 
G.~H.~Hardy, E.~M.~Wright,
\emph{An Introduction to the Theory of Numbers, $5$th ed.}, 
Clarendon Press, Oxford, 1979

\bibitem{Jameson}
M.~Jameson, R.~Schneider:
Combinatorial applications of M\"{o}bius inversion, 
\textit{Proc. Amer. Math. Soc.} \textbf{142}(9), 2965--2971 (2014)

\bibitem{McCarthy} 
P.~J.~McCarthy,
\textit{Introduction to Arithmetical Functions}, 
Springer-Verlag, New York, 1986	

\bibitem{Merca15}
M.~Merca, 
A new look on the generating function for the number of divisors, 
\textit{J. Number Theory}, \textbf{149} (2015), 57--69.

\bibitem{Merca16}
M.~Merca, 
Combinatorial interpretations of a recent convolution for the number of divisors of a positive integer, 
\textit{J. Number Theory}, \textbf{160} (2016), 60--75.

\bibitem{Merca17}
M.~Merca, 
New relations for the number of partitions with distinct even parts, 
\textit{J. Number Theory}, \textbf{176} (2017), 1--12.

\bibitem{MERCA-LSFACTTHM} 
M.~Merca, 
{The {L}ambert series factorization theorem}, 
{\it Ramanujan J.}, 44(2) (2017) 417--435. 

\bibitem{MrkMJOM}
M.~Merca,
New connections between functions from additive and multiplicative number theory,
\textit{Mediterr. J. Math.}, 15:36  (2018). 

\bibitem{MrkMJOM1}
M.~Merca,
Generalized Lambert series and Euler's pentagonal number theorem,
\textit{Mediterr. J. Math.}, 18:29  (2021). 

\bibitem{Merca18a}
M.~Merca, M.~D.~Schmidt,
A partition identity related to Stanley's theorem, 
\emph{Amer. Math. Monthly}, 125(10) (2018) 929--933. 	

\bibitem{Merca18b}
M.~Merca, M.~D.~Schmidt,
The partition function $p(n)$ in terms of the classical M\"{o}bius function, 
\emph{Ramanujan J.}, 49 (2019) 87--96. 

\bibitem{MrkS} 
M.~Merca, M.~D.~Schmidt,
Factorization theorems for generalized Lambert series and applications, 
\textit{Ramanujan J.},  51(2) (2020) 391--419. 

\bibitem{MERCA-SCHMIDT1}
M.~Merca,
M.~D.~Schmidt, 
{Generating special arithmetic functions by Lambert series factorizations}, 
\textit{Contrib. Discrete Math.}, 14(1)  (2019) 31--45.

\bibitem{SCHMIDT-LSFACTTHM} 
M.~D.~Schmidt, 
{New recurrence relations and matrix equations for arithmetic functions generated by {L}ambert series}, 
\textit{Acta Arith.}, 181 (2017) 355--367. 

\bibitem{Schneider}
R.~Schneider: 
Arithmetic of partitions and the $q$-bracket operator, 
\textit{Proc. Amer. Math. Soc.} \textbf{145}(5), 1953--1968 (2017)

\bibitem{Wakhare}
T.~Wakhare:
Special classes of $q$-bracket operators,
\textit{Ramanujan J.} \textbf{47} (2018) 309--316. 



\end{thebibliography}
\end{document}